\newtheorem{theorem}{Theorem}[section]
\newtheorem{corollary}[theorem]{Corollary}
\newtheorem{lemma}[theorem]{Lemma}
\newtheorem{proposition}[theorem]{Proposition}
\theoremstyle{remark}
\theoremstyle{definition}
\newtheorem{definition}[theorem]{Definition}
\newcommand{\N}{\mathbb{N}}
\newcommand{\F}{\mathbb{F}}
\newcommand{\E}{\mathcal{E}}
\newcommand{\End}{\mathrm{End}}
\newcommand{\Hom}{\mathrm{Hom}}
\newcommand{\IBr}{\mathrm{IBr}}
\newcommand{\Ind}{\mathrm{Ind}}
\newenvironment{mathex}[1][LLLLLLLLLLLL]{
\[%
\newcolumntype{L}{>{\displaystyle\setlength{\arraycolsep}{4pt}}l}%
\newcolumntype{C}{>{\displaystyle\setlength{\arraycolsep}{4pt}}c}%
\newcolumntype{R}{>{\displaystyle\setlength{\arraycolsep}{4pt}}r}%
\setlength{\arraycolsep}{1.5pt}%
\begin{array}{>{\vspace*{1.3ex}}#1}%
}{
\end{array}%
\vspace*{-1.3ex}%
\]%
\ignorespacesafterend%
}
\title{On Imprimitive Representations of Finite Reductive Groups in Non-defining Characteristic}
\date{}
\author{Matthias Klupsch}
\begin{document}
 \maketitle
 \vspace{-1cm}
 \begin{center}
  \small{RWTH Aachen University, Lehrstuhl D f\"{u}r Mathematik, \\ Pontdriesch 14/16, 52062 Aachen, Germany}
 \end{center}

\begin{abstract}
 In this paper, we begin with the classification of Harish-Chandra imprimitive representations
 in non-defining characteristic. We recall the connection of this problem to certain generalizations of 
 Iwahori-Hecke algebras and show that Harish-Chandra induction is compatible with the Morita equivalence 
 by Bonnaf\'{e} and Rouquier, thus reducing the classification problem to quasi-isolated blocks.
 Afterwards, we consider imprimitivity of unipotent representations of certain classical groups. In the case 
 of general linear and unitary groups, our reduction
 methods then lead to results for arbitrary Lusztig series.

 \textbf{Keywords}:
 modular representation theory, finite reductive groups,
 Harish-Chandra induction, imprimitive representation 
 
 \textbf{Mathematics Subject Classification 2010}:  20C33
 \end{abstract}
 \section{Introduction}
 In \cite{hiss2015imprimitive} and \cite{hiss2017imprimitive}, the imprimitive representations 
 of finite quasi-simple groups in characteristic $0$ were classified and some results 
 were obtained for arbitrary characteristic.
 Focusing now on positive characteristic, one big part of the classification problem of imprimitive
 representations revolves around Harish-Chandra imprimitive representations of finite reductive groups.
    
 Let $G$ be a connected reductive group over the algebraic closure $\F$ of a field $\F_q$ with $q$ elements and suppose that $G$ is 
 defined over $\F_q$ via a Frobenius morphism $F : G \to G$.
 We consider a prime $\ell$ not dividing $q$ and an $\ell$-modular system $(K,R,k)$ 
 which is split for $G^F$ and all its subgroups.
 
 For an $F$-stable parabolic subgroup $P \subseteq G$ with unipotent radical $V$ and $F$-stable Levi complement $L$ and $\Lambda \in \{K,R,k\}$ we consider 
 Harish-Chandra induction 
 $$ R^G_{L \subseteq P} : \Lambda L^F \mathrm{-mod} \to  \Lambda G^F \mathrm{-mod}, X \mapsto \Lambda \: G^F/V^F \otimes_{\Lambda L^F} X$$
 and Harish-Chandra restriction 
 $$ {}^*\!R^G_{L \subseteq P} : \Lambda G^F \mathrm{-mod} \to  \Lambda L^F \mathrm{-mod}, X \mapsto \Lambda \: V^F \backslash G^F \otimes_{\Lambda G^F} X.$$
 These two functors are left and right adjoints of one another and fixing the split Levi subgroup $L$ while changing the parabolic subgroup gives rise to  naturally equivalent functors.
 
 An imprimitive representation is one which is induced from a proper subgroup.
 When only considering Harish-Chandra induced representations, we get the notion of Harish-Chandra imprimitivity.
 \begin{definition}
  A simple $k G^F$-module $X$ is called (Harish-Chandra) imprimitive if $R^G_L X' \cong X$ for some $k L^F$-module $X'$
  where $L \subseteq G$ is a proper split Levi subgroup of $G$.
  If $X$ is not imprimitive, it is said to be primitive.
 \end{definition}
 Since we are only considering the concept of Harish-Chandra imprimitivity in the following, there will be no confusion in calling it simply imprimitivity.
 It is also noteworthy that by \cite[Prop. 7.1]{hiss2015imprimitive}, the notion of Harish-Chandra imprimitivity coincides with the 
 more general notion of imprimitivity for quasi-simple groups of Lie type.
 
 It takes multiple steps to reach a classification of Harish-Chandra imprimitive representations in non-defining characteristic.
It makes sense to look at unipotent representations of groups with connected center first
with the idea of reducing the general problem to this case or similar cases.
For most of the classical groups, one can apply a result by Christoph Sch\"{o}nnenbeck 
on Iwahori-Hecke algebras (\cite{schoennenbeck2017induced}) to the endomorphism algebras of the Harish-Chandra induction of 
cuspidal modules to find that imprimitivity is quite rare for unipotent representations but actually 
does occur in contrast to the analogous situation in characteristic $0$ (\cite[Corollary 8.5]{hiss2015imprimitive}).

To get from unipotent representations to arbitrary ones is more difficult 
in positive characteristic than in characteristic $0$. For instance, we do not have an analogue to  
the Jordan decomposition of characters.
However, we can at least make use of the Morita equivalence by Bonnaf\'{e} and Rouquier from \cite{bonnafe2003categories} 
to reduce our problem to the study of representations in (quasi-)isolated Lusztig series.
For this, we shall prove that Harish-Chandra induction commutes with this kind of Morita 
equivalences.
As a consequence, we will be able to extend our results on unipotent representations to arbitrary Lusztig series 
for the general linear and unitary groups.

 \section{Imprimitivity and Hecke Algebras}
 In characteristic $0$, the property of a representation of $G^F$ to be imprimitive turns out to be 
 a property of the Harish-Chandra series it belongs to. In fact \cite[Thm. 8.3]{hiss2015imprimitive} implies 
 that if there is one imprimitive representation in a Harish-Chandra series, then all 
 representations in this series are imprimitive.
 The proof of this theorem relies heavily 
 on the knowledge of the algebras $\End_{K G^F}(R^G_{L_0} X_0)$ 
 where $X_0$ is a cuspidal $K L_0^F$-module.
 In this section, we shall review the properties of corresponding algebras in characteristic $\ell$.

 Let $(L_0,X_0)$ be a cuspidal pair of $(G,F)$, that is, $L_0 \subseteq G$
 is a split Levi subgroup of $G$ and $X_0$ is a simple cuspidal $k L_0^F$-module.
 We consider the full subcategory $k G^F\mathrm{-mod}_{(L_0,X_0)}$ of $k G^F\mathrm{-mod}$ 
 whoses objects $X$ admit a monomorphism $X \to (R^G_{L_0} X_0)^n $ and an epimorphism 
 $(R^G_{L_0}X_0)^m \to X$ for some positive integers $m,n \in \N$.
 In particular, every simple $k G^F$-module belonging to the Harish-Chandra series of $(L_0,X_0)$ 
 is an object in $k G^F\mathrm{-mod}_{(L_0,X_0)}$ by \cite[Thm. 1.28]{cabanes2004representation}.
 As an important consequence of this, we note that the simple objects in $k G^F\mathrm{-mod}_{(L_0,X_0)}$, that is, the non-zero objects whose only proper subobjects are zero-objects, 
 are precisely the simple $k G^F$-modules belonging to the Harish-Chandra series of $(L_0,X_0)$.
 
 The functor 
 $$ \Hom_{k G^F}(R^G_{L_0}X_0 , - ): k G^F\mathrm{-mod}_{(L_0,X_0)} \to \End_{k G^F}(R^G_{L_0} X_0)^\circ\mathrm{-mod}  $$
 is an equivalence of categories by \cite[Thms. 1.20, 1.25]{cabanes2004representation}.
 
 Recall that the algebra $\End_{k G^F}(R^G_{L_0} X_0)^\circ$ has a $k$-basis $\{B_w\}_{w}$ indexed by $w \in N_{G^F}(L_0,X_0)/{L_0}^F$,
 where $N_{G^F}(L_0,X_0) = \{g \in N_{G}(L_0)^F \:|\: {}^g X_0 \cong X_0\}$.
 This algebra is akin to Iwahori-Hecke algebras in view of \cite[Thm. 3.12]{geck1996towards}.

 \begin{lemma}\label{la: HC commutative diagram}
 Given any split Levi subgroup $L \subseteq G$ with $L_0 \subseteq L$,
 the algebra morphism $R^G_{L} : \End_{k L^F}(R^{L}_{L_0}X_0) \to \End_{k G^F}(R^{G}_{L_0}X_0)$ 
 is injective with $R^G_{L}(B_w) = B_w$ for all $w \in N_{L^F}(L_0,X_0)/L_0^F$.
 Moreover, denoting by $\Ind_{R^G_L}$ the induction functor associated with this morphism  
 the diagram 
 \[
  \xymatrix{
   k G^F\mathrm{-mod}_{(L_0,X_0)} \ar[rrrr]^{\Hom_{k G^F}(R^G_{L_0}X_0 , - )} &&&& \End_{k G^F}(R^G_{L_0} X_0 )^\circ\mathrm{-mod} \\ \\     
   k L^F\mathrm{-mod}_{(L_0,X_0)} \ar[rrrr]^{\Hom_{k L^F}(R^L_{L_0}X_0 , - )} \ar[uu]^{R^G_L}&&&& \End_{k L^F}(R^L_{L_0} X_0)^\circ\mathrm{-mod} \ar[uu]^{\Ind_{R^G_{L}}}
  }
 \]
 is commutative up to natural isomorphism.
 \end{lemma}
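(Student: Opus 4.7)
The map $R^G_L$ between the endomorphism algebras and its multiplicativity are immediate from the transitivity isomorphism $R^G_L \circ R^L_{L_0} \cong R^G_{L_0}$: the functor $R^G_L$ sends $\varphi \in \End_{kL^F}(R^L_{L_0}X_0)$ to $R^G_L(\varphi) \in \End_{kG^F}(R^G_L R^L_{L_0}X_0) \cong \End_{kG^F}(R^G_{L_0}X_0)$, and functoriality makes this compatible with composition and identities. For the equation $R^G_L(B_w) = B_w$---from which injectivity follows at once, since the basis $\{B_w\}_{w \in N_{L^F}(L_0,X_0)/L_0^F}$ of the smaller algebra is then mapped bijectively to a subset of the basis of the larger one, hence to a linearly independent set---the plan is to unfold the Howlett-Lehrer construction of the basis elements. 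Via the adjunction $\End_{kG^F}(R^G_{L_0}X_0) \cong \Hom_{kL_0^F}({}^*R^G_{L_0}R^G_{L_0}X_0, X_0)$ and the Mackey-type decomposition ${}^*R^G_{L_0}R^G_{L_0}X_0 \cong \bigoplus_{w \in N_{G^F}(L_0)/L_0^F} {}^wX_0$ (where cuspidality of $X_0$ kills the double cosets not normalizing $L_0$), the basis element $B_w$ corresponds to a fixed isomorphism ${}^wX_0 \to X_0$ on the $w$-summand and zero elsewhere. Using the factorization ${}^*R^G_{L_0}R^G_{L_0} \cong {}^*R^L_{L_0}\,{}^*R^G_L R^G_L R^L_{L_0}$ together with the fact that the trivial-double-coset summand in the Mackey decomposition of ${}^*R^G_L R^G_L R^L_{L_0}X_0$ is $R^L_{L_0}X_0$ itself (embedded by the unit of adjunction), I expect a diagram chase through the adjunction units and counits to show that $R^G_L(B_w)$ selects exactly the ${}^wX_0$-summand with the same chosen isomorphism used to define $B_w$ in the larger algebra.

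For the commutative diagram, I will construct the natural transformation directly. Functoriality of $R^G_L$ gives, for each $Y \in kL^F\mathrm{-mod}_{(L_0,X_0)}$, a $k$-linear map
\[
\Hom_{kL^F}(R^L_{L_0}X_0, Y) \to \Hom_{kG^F}(R^G_L R^L_{L_0}X_0, R^G_L Y) \cong \Hom_{kG^F}(R^G_{L_0}X_0, R^G_L Y),
\]
which is $\End_{kL^F}(R^L_{L_0}X_0)^\circ$-equivariant once the target is viewed as a module via the algebra homomorphism just constructed. Extension of scalars then produces a canonical $\End_{kG^F}(R^G_{L_0}X_0)^\circ$-linear natural transformation
\[
\tilde\Phi_Y : \Ind_{R^G_L}\Hom_{kL^F}(R^L_{L_0}X_0, Y) \to \Hom_{kG^F}(R^G_{L_0}X_0, R^G_L Y).
\]
To verify $\tilde\Phi_Y$ is an isomorphism I observe that both sides are right exact in $Y$ (by exactness of $R^G_L$, exactness of the Hom-equivalences on their domains per \cite[Thms.\ 1.20, 1.25]{cabanes2004representation}, and right exactness of tensor products), and that on the progenerator $Y = R^L_{L_0}X_0$ both sides canonically identify with $\End_{kG^F}(R^G_{L_0}X_0)^\circ$ as a left regular module, with $\tilde\Phi_Y$ being the identity. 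Since every object of $kL^F\mathrm{-mod}_{(L_0,X_0)}$ admits a presentation by finite direct sums of copies of $R^L_{L_0}X_0$, a five-lemma argument then gives the isomorphism for every $Y$.

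The main obstacle is the Mackey-theoretic normalization step in the first paragraph: one must verify that the summand ${}^wX_0$ appearing for $w \in N_{L^F}(L_0)/L_0^F$ in the small decomposition ${}^*R^L_{L_0}R^L_{L_0}X_0 \cong \bigoplus_u {}^uX_0$ is identified, via the unit of the $(R^G_L, {}^*R^G_L)$-adjunction, with the corresponding summand of ${}^*R^G_{L_0}R^G_{L_0}X_0$, and that the isomorphisms ${}^wX_0 \cong X_0$ chosen to normalize $B_w$ inside the two endomorphism algebras genuinely coincide. This is a bookkeeping exercise with coset representatives and the Bruhat-type setup whose outcome is unsurprising but which requires care to state cleanly.
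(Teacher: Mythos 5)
Your construction of the algebra morphism and the observation that $R^G_L(B_w)=B_w$ implies injectivity are fine (the paper just deduces injectivity directly from faithfulness of Harish-Chandra induction, but your route is equally valid). Like the paper, though, you do not actually carry out the identification $R^G_L(B_w)=B_w$; you outline a Mackey-decomposition plan and acknowledge it as unfinished ``bookkeeping'', while the paper likewise defers to ``a simple calculation from the definition of the $B_w$ and transitivity''. So neither version gives full details there, and that part is comparable.

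The commutativity argument is where you genuinely diverge from the paper, and where a gap hides. The paper works \emph{pointwise} in $Y$: writing $X=R^L_{L_0}X_0$, it uses the Mackey formula to identify ${}^*\!R^G_LR^G_LX$ (up to summands orthogonal to $kL^F\mathrm{-mod}_{(L_0,X_0)}$) with a direct sum of copies of $X$, so that the canonical map $\Hom_{kL^F}({}^*\!R^G_LR^G_LX,X)\otimes_{\End(X)^\circ}\Hom_{kL^F}(X,Y)\to\Hom_{kL^F}({}^*\!R^G_LR^G_LX,Y)$ is an isomorphism for each $Y$, and then transports this through the $(R^G_L,{}^*\!R^G_L)$ adjunction. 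You instead build the natural transformation by extension of scalars, check it is the identity on the progenerator $R^L_{L_0}X_0$, and appeal to right exactness plus the five lemma. The hidden issue is your right-exactness claim for the composite $\Hom_{kG^F}(R^G_{L_0}X_0,-)\circ R^G_L$ on $kL^F\mathrm{-mod}_{(L_0,X_0)}$. The functor $R^G_L$ is exact between the \emph{ambient} module categories, but the categories $k?^F\mathrm{-mod}_{(L_0,X_0)}$ are not closed under cokernels in the ambient categories (quotients of $R$-generated objects need not be $R$-torsionfree), so their abelian structure is the one transported from $\End^\circ\mathrm{-mod}$, not the ambient one. To see that $R^G_L$ is right exact for \emph{these} structures, one has to apply $\Hom_{kG^F}(R^G_{L_0}X_0,-)$, use adjunction to pass to $\Hom_{kL^F}({}^*\!R^G_L R^G_{L_0}X_0,-)$, and then invoke the very Mackey decomposition and orthogonality of cuspidal pairs that your argument was trying to route around. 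In other words, your approach isn't wrong, but it doesn't actually avoid the Mackey input; it merely relocates it into an unjustified exactness assertion. You should either prove that right-exactness explicitly (which will look a lot like the paper's argument), or adopt the paper's pointwise computation, which is shorter and makes the dependence on Mackey and cuspidality transparent.
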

 \begin{proof}
  Let us set $X = R^L_{L_0} X_0$. Since Harish-Chandra induction is faithful, the morphism 
  $R^G_L : \Hom_{k L^F}(X,X) \to \Hom_{k G^F}(R^G_L X,R^G_L X)$ is injective.
  The identity $R^G_L(B_w) = B_w$ for all $w \in N_{L^F}(L_0,X_0)/L_0^F$ follows with a simple calculation from the 
  definition of the $B_w$ \cite[(3.5)]{geck1996towards} and the transitivity of Harish-Chandra induction.
   
  For $k L^F$-modules $Y$ and $Z$, we consider the natural map  
  \begin{mathex}[RLL]
  \Hom_{k L^F}(Z, X)  \otimes_{\End_{k L^F}(X)^\circ}  \Hom_{k L^F}(X, Y)  &\to& \Hom_{k L^F}(Z, Y), \\
  \varphi \otimes f &\mapsto& f \circ \varphi.
  \end{mathex}
  This is an isomorphism for $Z = X$ and thus also for $Z$ being a direct sum of copies of $X$.
  In particular, by the Mackey formula \cite[Thm. 5.1]{digne1991representations}, 
  \begin{mathex}[RLL]
  \Hom_{k L^F}({}^*\!R^{G}_L R^{G}_L X, X)  \otimes_{\End_{k L^F}(X)^\circ}  \Hom_{k L^F}(X, Y)  &\to& \Hom_{k G^F}({}^*\!R^{G}_L R^{G}_L X, Y),  \\
     \varphi \otimes f &\mapsto& f \circ \varphi
  \end{mathex}
  is an isomorphism natural in $Y \in  k L^F\mathrm{-mod}_{(L_0,X_0)}$ and by adjointness, the map 
  \begin{mathex}[RLL]
  \End_{k G^F}(R^G_L X)^\circ  \otimes_{\End_{k L^F}(X)^\circ}  \Hom_{k L^F}(X, Y)  &\to& \Hom_{k G^F}(R^G_L X,R^G_L Y), \\
   \varphi \otimes f &\mapsto& R^G_L(f) \circ \varphi
  \end{mathex}
  is an isomorphism, too.
 \end{proof}
  This result tells us that finding the imprimitive representations in the Harish-Chandra series of $(L_0, X_0)$ amounts 
  to finding the simple $\End_{k G^F}(R^G_{L_0} X_0)^\circ$-modules 
  which are of the form $\Ind_{R^G_L}(M)$ for some simple $\End_{k L^F}(R^{L}_{L_0} X_0)^\circ$-module $M$.
  The easiest case is the following.
  \begin{corollary}\label{cor: normalizers equal is sufficient}
   Let $L \subseteq G$ be a proper split Levi subgroup of $G$ containing $L_0$.
   If $N_{G^F}(L_0,X_0) = N_{L^F}(L_0,X_0)$, then every simple $k G^F$-module
   belonging to the Harish-Chandra series of $(L_0,X_0)$ is imprimitive.
  \end{corollary}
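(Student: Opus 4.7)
The plan is to leverage Lemma \ref{la: HC commutative diagram} to turn the equality of normalizers into an algebra isomorphism, and then to transport this via the commutative diagram into an equivalence of the relevant module categories. This should force every simple $kG^F$-module in the Harish-Chandra series of $(L_0,X_0)$ to be in the image of $R^G_L$ applied to the corresponding $kL^F$-series.

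More concretely, I would first recall that both $\End_{kG^F}(R^G_{L_0}X_0)^\circ$ and $\End_{kL^F}(R^L_{L_0}X_0)^\circ$ possess $k$-bases $\{B_w\}$ indexed respectively by $N_{G^F}(L_0,X_0)/L_0^F$ and $N_{L^F}(L_0,X_0)/L_0^F$. By Lemma \ref{la: HC commutative diagram}, the algebra morphism $R^G_L$ is injective and sends the basis element $B_w$ of the small algebra to the basis element $B_w$ of the big algebra for each $w \in N_{L^F}(L_0,X_0)/L_0^F$. Hence, under the hypothesis $N_{G^F}(L_0,X_0) = N_{L^F}(L_0,X_0)$, the map $R^G_L$ sends one $k$-basis bijectively onto the other and is therefore an isomorphism of $k$-algebras.

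Next, I would deduce that the induction functor $\Ind_{R^G_L}$ along this algebra isomorphism is an equivalence of categories, so in particular every simple $\End_{kG^F}(R^G_{L_0}X_0)^\circ$-module is isomorphic to $\Ind_{R^G_L}(M)$ for some simple $\End_{kL^F}(R^L_{L_0}X_0)^\circ$-module $M$. Applying the commutative diagram from Lemma \ref{la: HC commutative diagram} together with the equivalences $\Hom_{kG^F}(R^G_{L_0}X_0,-)$ and $\Hom_{kL^F}(R^L_{L_0}X_0,-)$, I conclude that $R^G_L : kL^F\text{-mod}_{(L_0,X_0)} \to kG^F\text{-mod}_{(L_0,X_0)}$ is an equivalence of categories. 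In particular, every simple object of the target, that is, every simple $kG^F$-module in the Harish-Chandra series of $(L_0,X_0)$, is isomorphic to $R^G_L Y$ for some simple $kL^F$-module $Y$. Since $L$ is a proper split Levi subgroup of $G$, this exhibits each such module as imprimitive.

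The argument is essentially immediate once Lemma \ref{la: HC commutative diagram} is in place; there is no real obstacle. The only minor point to be careful about is that equivalences of module categories preserve and reflect simplicity, so the simple modules really do correspond under the equivalence induced by $R^G_L$, and the equivalence takes simples in $kL^F\text{-mod}_{(L_0,X_0)}$ to simples in $kG^F\text{-mod}_{(L_0,X_0)}$ rather than to semisimple or arbitrary objects.
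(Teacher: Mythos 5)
Your argument is correct and follows essentially the same route as the paper: establish that the hypothesis forces $R^G_L : \End_{kL^F}(R^L_{L_0}X_0) \to \End_{kG^F}(R^G_{L_0}X_0)$ to be an algebra isomorphism, then transport this through the commutative diagram of Lemma~\ref{la: HC commutative diagram} to get the equivalence of module categories and hence imprimitivity of every simple in the series. The only cosmetic difference is that you argue via the explicit $B_w$-basis bijection while the paper simply compares $k$-dimensions to upgrade the injection to an isomorphism.
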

  \begin{proof}
   If $N_{G^F}(L_0,X_0) = N_{L^F}(L_0,X_0)$, 
   then $$R^G_{L} : \End_{k L^F}(R^{L}_{L_0}X_0) \to \End_{k G^F}(R^{G}_{L_0}X_0)$$
   is a monomorphism between $k$-vector spaces of the same dimension and thus an isomorphism. 
   By Lemma (\ref{la: HC commutative diagram}), this implies that  
   every simple $k G^F$-module
   belonging to the Harish-Chandra series of $(L_0,X_0)$ is Harish-Chandra
   induced.
  \end{proof}
  It is conjectured that the converse of this corollary also holds true if the center of $G$ is connected as well as that  
  imprimitivity of a $k G^F$-module implies   
  the imprimitivity of every other module in the same Harish-Chandra series as it is the case in 
  in characteristic $0$.
 \section{The Bonnaf\'{e} -- Rouquier Morita equivalence}
 
In characteristic $0$, as was mentioned before, imprimitivity can be viewed as a property of 
Harish-Chandra series. Moreover, it was proven in \cite[Thm. 7.3, Thm 8.4]{hiss2015imprimitive} that 
imprimitivity can also be viewed as a property of Lusztig series in characteristic $0$. 

Lusztig series are compatible with modular representation theory as was shown by Brou\'{e} and Michel in \cite{broue1989blocs}.
In particular, certain unions of Lusztig series turn out to be unions of $\ell$-blocks.

In \cite{bonnafe2003categories}, Bonnaf\'{e} and Rouquier showed that every $\ell$-block 
of a finite reductive group is Morita equivalent
to some quasi-isolated $\ell$-block of a possibly different finite reductive group. 
In this section, we shall show that this Morita equivalence is compatible with Harish-Chandra induction 
which also implies that it preserves and reflects the property of being imprimitive.
To do so we shall need a result by Bonnaf\'{e}, Dat and Rouquier from \cite{bonnafe2015derived} which gives a sufficient condition for 
Lusztig induction to depend only on the Levi subgroup (and not on the parabolic subgroup).

So let $(G^*,F^*)$ be a group in duality with $(G,F)$.
Recall that we have a decomposition
$$ \Lambda G^F = \bigoplus_{[s]} \Lambda G^F e_s^{G^F}$$
into sums of blocks corresponding to the decomposition 
$$ \IBr(G^F) = \bigcup_{[s]} \E_{\ell}(G,F,[s])$$
into $\ell$-modular Lusztig series. Both decompositions are indexed by the conjugacy classes 
of semisimple $\ell'$-elements in ${G^*}^{F^*}$.

Let us fix a semisimple element $s \in (G^*)^{F^*}_{\ell'}$ and let $G_s^* \subseteq G^*$
be a rational Levi subgroup containing $C_{G^*}(s)$. Let $G_s \subseteq G$ correspond 
to $G_s$ under duality. 
If $P_s = G_s V_s$ denotes a parabolic subgroup of $G$ with Levi complement $G_s$,
then the Deligne-Lusztig variety 
$$ Y^{G}_{G_s \subseteq P_s} = \{g V_s \in G/V_s \:|\: g^{-1}F(g) \in V_s F(V_s)\} $$
has the property $H_c^i(Y^{G}_{G_s \subseteq P_s}, \Lambda) = 0$ except for $i = d := \dim(Y^{G}_{G_s \subseteq P_s})$, 
and $H_c^d(Y^{G}_{G_s \subseteq P_s}, \Lambda)$ induces a Morita equivalence between 
the sum of blocks $\Lambda G_s^F e_s^{G_s^F}$ and $\Lambda G e_s^{G^F}$ where $e_s^{G_s^F}$ and $e_s^{G^F}$
denote the central idempotents corresponding to the Lusztig series $\E_\ell(G_s, F, [s])$ and $\E_\ell(G,F,[s])$,
respectively. This is the main result of \cite{bonnafe2003categories}.

Now, let $L \subseteq G$ be a split Levi subgroup, $L^* \subseteq G^*$ a dual correspondent and 
suppose that $s \in L^*$.
We set $L_s = L \cap G_s$ and $L_s^* = L^* \cap G_s^*$.
Then $L_s \subseteq G_s$ is a split Levi subgroup of $G_s$ and $L_s^*$ is a dual correspondent.
Since we have $C_{G^*}(s) \subseteq G_s^*$, we also have $C_{L^*}(s) = L^* \cap C_{G^*}(s) \subseteq L^* \cap G_s^* = L_s^*$.
The group $P_s \cap L = L_s (V_s \cap L)$ is a parabolic subgroup of $L$ with Levi complement $L_s$.

As above, the associated Deligne-Lusztig variety $Y^{L}_{L_s \subseteq P_s \cap L}$ has non-vanishing 
cohomology only in degree $d' = \dim(Y^{L}_{L_s \subseteq P_s \cap L})$ and the 
$\Lambda L^F e_s^{L^F} \otimes (\Lambda L_s^F e_s^{L_s^F})^\circ$-module $H^{d'}_c(Y^{L}_{L_s \subseteq P_s \cap L},\Lambda)$
induces a Morita equivalence between $\Lambda L_s^F e_s^{L_s^F}$ and $\Lambda L^F e_s^{L^F}$.

We want to show that the two Morita equivalences just obtained turn Harish-Chandra induction
from $L_s^F$ to $G_s^F$ into Harish-Chandra induction from $L^F$ to $G^F$.

Let $P = L V$ be a rational parabolic subgroup of $G$ having $L$ as Levi complement.
Then $P_1 = (P \cap P_s)V$ and $P_2 = (P \cap P_s)V_s$ are both parabolic subgroups of $G$ having $L_s$
as Levi complement. 
Their unipotent radicals are given by $V_1 = (V_s \cap L)V$ and $V_2 = (G_s \cap V)V_s$, respectively.

We consider their dual correspondents $V_1^*$ and $V_2^*$ and find that $C_{G^*}(s) \subseteq G_s^*$ implies 
$$ C_{V_1^*}(s) = C_{(V_s^* \cap L^*)V^* \cap G_s^*}(s) = C_{G_s \cap V^*}(s) $$
as well as 
$$ C_{V_2^*}(s) = C_{(G_s^* \cap V^*) V_s^* \cap G_s^*}(s) = C_{G_s \cap V^*}(s).$$
Thus, the assumptions of \cite[Cor. 6.5]{bonnafe2015derived} are satisfied and we conclude 
that Lusztig inductions with respect to $P_1$ and $P_2$ are naturally isomorphic up to shifting (and a Tate twist).
Using the transitivity of Lusztig induction (cf. \cite[Thm. 7.9]{cabanes2004representation} and \cite[3.3]{bonnafe2003categories}),  
we find that the diagram 
\[
 \xymatrix{
  D(\Lambda G_s^F e_s^{G_s^F}\mathrm{-mod}) \ar[rr]^{H^d_c(Y^{G}_{G_s \subseteq P_s}, \Lambda) \otimes - }   && D(\Lambda G^F e_s^{G^F}\mathrm{-mod})   \\ \\  
  D(\Lambda L_s^F e_s^{L_s^F}\mathrm{-mod}) \ar[rr]^{H^{d'}_c(Y^{G}_{G_s \subseteq P_s}, \Lambda) \otimes - } 
                                      \ar[uu]^{R^{G_s}_{L_s}}  && D(\Lambda L^F e_s^{L^F}\mathrm{-mod})  \ar[uu]^{R^{G}_{L}} 
 }
\]
is commutative up to shifting and equivalence.
However, since all the functors are exact, with the vertical functors being Harish-Chandra induction, 
they commute with homology which implies that no shifting is required for the diagram to commute 
and so we actually obtain the following result.
\begin{lemma}\label{la: Morita commutative diagram}
Given the notation and assumptions of this section, the diagram 
\[
 \xymatrix{
  \Lambda G_s^F e_s^{G_s^F}\mathrm{-mod} \ar[rr]^{H^d_c(Y^{G}_{G_s \subseteq P_s}, \Lambda) \otimes - }   && \Lambda G^F e_s^{G^F}\mathrm{-mod}   \\ \\  
  \Lambda L_s^F e_s^{L_s^F}\mathrm{-mod} \ar[rr]^{H^{d'}_c(Y^{G}_{G_s \subseteq P_s}, \Lambda) \otimes - } 
                                      \ar[uu]^{R^{G_s}_{L_s}}  && \Lambda L^F e_s^{L^F}\mathrm{-mod}  \ar[uu]^{R^{G}_{L}} 
 }
\]
is commutative up to natural isomorphism.
\end{lemma}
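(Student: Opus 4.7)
The plan is to realise both compositions in the square as Lusztig induction from $L_s^F$ to $G^F$ (on appropriate blocks), carried out via two different parabolic subgroups, to identify them using the parabolic-independence result of Bonnaf\'e--Dat--Rouquier \cite[Cor. 6.5]{bonnafe2015derived}, and finally to pass from the resulting derived-category statement down to the module-category statement by exploiting exactness of all functors involved.

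More concretely, I would fix a rational parabolic $P = LV$ with Levi complement $L$ and consider the two parabolic subgroups $P_1 = (P \cap P_s)V$ and $P_2 = (P \cap P_s)V_s$, both of which have $L_s$ as Levi complement but have unipotent radicals $V_1 = (V_s \cap L)V$ and $V_2 = (G_s \cap V)V_s$ respectively. Using the transitivity of Lusztig induction, together with the fact that the horizontal functors in the square are (up to cohomological shift) the derived Lusztig inductions $R^G_{G_s \subseteq P_s}$ and $R^L_{L_s \subseteq P_s \cap L}$, the clockwise composition can be identified with $R^G_{L_s \subseteq P_1}$ and the counterclockwise composition with $R^G_{L_s \subseteq P_2}$, each restricted to the relevant blocks. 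A short calculation, using $C_{G^*}(s) \subseteq G_s^*$, shows that $C_{V_1^*}(s) = C_{V_2^*}(s) = C_{G_s^* \cap V^*}(s)$, so \cite[Cor. 6.5]{bonnafe2015derived} yields a natural isomorphism between these two Lusztig inductions in the derived category, up to a shift by $d - d'$ and a Tate twist.

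The step I expect to be most delicate is the descent from the derived category down to the abelian module categories. The key observation is that every functor appearing in the square is exact: Harish-Chandra induction is exact by definition, and each horizontal functor $H^{\bullet}_c(Y, \Lambda) \otimes -$ is an equivalence of abelian categories because the relevant Deligne--Lusztig cohomology is concentrated in a single degree. Consequently both compositions send an honest module to a complex concentrated in degree $0$, so matching cohomology in the derived-category isomorphism forces the shift to vanish and extracts the desired natural isomorphism of functors at the level of module categories. A minor loose end is to check that the natural isomorphism of \cite[Cor. 6.5]{bonnafe2015derived} respects the block decompositions under consideration; this follows because the idempotents $e_s^{G^F}$, $e_s^{L^F}$, $e_s^{G_s^F}$, $e_s^{L_s^F}$ are matched under the Jordan decomposition bijection on Lusztig series, and each of the Lusztig induction functors above preserves the corresponding series.
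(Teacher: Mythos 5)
Your proposal is essentially identical to the paper's argument: the same parabolic subgroups $P_1 = (P\cap P_s)V$ and $P_2 = (P\cap P_s)V_s$, the same verification that $C_{V_1^*}(s) = C_{V_2^*}(s) = C_{G_s^*\cap V^*}(s)$ feeding into \cite[Cor.~6.5]{bonnafe2015derived}, transitivity of Lusztig induction to identify the two compositions, and descent from the derived to the abelian category via exactness so that the shift disappears. The only (minor) addition you make is to explicitly flag that the natural isomorphism must respect the block idempotents $e_s$, which the paper leaves implicit.
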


\section{Imprimitivity for unipotent representations of classical groups}
In this section we are going to use the results from Section 4 of 
\cite{geck1996towards}.
Accordingly, we let $(G,F)$ be such that $G^F = G_n(q)$ is one of the groups 
\begin{enumerate}[(a)]
 \item $GL_n(q)$ (any $q$, $n \geq 0$)
 \item $GU_n(q)$ (any $q$, $n \geq 0$)
 \item $Sp_{n}(q)$ ($q$ a power of $2$, $n \geq 0$ even)
 \item $CSp_n(q)$ ($q$ odd, $n \geq 0$ even)
 \item $SO_n(q)$ ($q$ odd, $n \geq 0$ odd)
\end{enumerate}

The reason for restricting to this list of groups
is the following result which is not known for other groups or even known to be at least partially false, for example for the even dimensional orthogonal groups.
\begin{proposition}\label{prop: summary towards}
 Let $(L_0,X_0)$ be a cuspidal pair of $(G,F)$. 
 If $X_0$ is unipotent, 
 then we have 
 $$N_{G^F}(L_0,X_0) = N_{G^F}(L_0)$$ and $X_0$ is extendible to $N_{G^F}(L_0)$.
 
 Moreover, the algebra $\End_{k G^F}(R^G_{L_0} X_0)$ is an Iwahori-Hecke algebra
 associated with the relative Weyl group $N_{G^F}(L_0)/L_0^F$.
\end{proposition}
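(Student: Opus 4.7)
The plan is to derive the three claims simultaneously by invoking the case-by-case analysis of cuspidal unipotent pairs for the families (a)--(e) carried out in Section~4 of \cite{geck1996towards}, and feeding the outcome into the general Hecke-algebra framework already recalled before Lemma~\ref{la: HC commutative diagram}.

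First I would recall the classification of cuspidal unipotent pairs $(L_0,X_0)$ for the listed groups. In each family the split Levis $L_0$ supporting a cuspidal unipotent $k L_0^F$-module are restricted by combinatorial data (for example $2$-cores for $GU_n(q)$ and symbols for the symplectic and orthogonal groups), and crucially the cuspidal unipotent module $X_0$ is unique up to isomorphism for each such $L_0$. This uniqueness immediately yields $N_{G^F}(L_0,X_0)=N_{G^F}(L_0)$: any $g\in N_{G^F}(L_0)$ carries $X_0$ to another cuspidal unipotent $k L_0^F$-module, which by uniqueness must be isomorphic to $X_0$.

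For extendibility of $X_0$ to $N_{G^F}(L_0)$ I would appeal to the explicit constructions of Section~4 of \cite{geck1996towards}. The cleanest route is to lift the picture to characteristic zero: one chooses an $R L_0^F$-lattice whose reduction modulo $\ell$ is $X_0$, uses the fact that in these classical groups the cuspidal unipotent character of $L_0^F$ is already known to extend to $N_{G^F}(L_0)$, and then reduces this extension modulo $\ell$ to obtain an extension of $X_0$. Once extendibility is secured, Geck's theorem \cite[Thm.~3.12]{geck1996towards}, combined with the structural description of the basis $\{B_w\}_w$ recalled in the excerpt, identifies $\End_{k G^F}(R^G_{L_0}X_0)^\circ$ with a Hecke algebra of the relative Weyl group $N_{G^F}(L_0)/L_0^F$. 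It then remains to compute the parameters $q^{c_s}$ attached to the Coxeter generators $s$; for the groups in (a)--(e) these are read off from Section~4 of \cite{geck1996towards} and coincide with the parameters of an Iwahori-Hecke algebra, with no non-trivial $2$-cocycle appearing.

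The main obstacle will be the extendibility step: it is the only part of the statement that is not a formal consequence of the classification and of the abstract Hecke-algebra machinery, and the constructions required differ qualitatively between the linear, unitary, symplectic, conformal symplectic and odd orthogonal cases (and in particular must be adapted to the characteristic-two setting of case (c)). Once extendibility is in place, both the normaliser identity and the identification of the endomorphism algebra as an Iwahori-Hecke algebra follow essentially mechanically from the uniqueness of $X_0$ and from the parameter computations in \cite{geck1996towards}.
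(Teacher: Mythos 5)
Your high-level plan matches the paper's: the paper's own proof is essentially a citation of Propositions 4.3 and 4.4 of \cite{geck1996towards} for cases (b)--(e), a remark that case (a) is analogous, and the reference \cite[19.20]{cabanes2004representation} for the Iwahori--Hecke identification in case (a). The normalizer identity via uniqueness of the cuspidal unipotent module per Levi, and the appeal to \cite[Thm.~3.12]{geck1996towards} for the Hecke-algebra structure once an extension is secured, are indeed the load-bearing ingredients in the cited arguments.

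However, your proposed route to the extendibility of $X_0$ has a genuine gap. You suggest choosing an $RL_0^F$-lattice whose reduction modulo $\ell$ is $X_0$, extending the associated $K$-module to $N_{G^F}(L_0)$, and reducing the extension. But $X_0$ is typically \emph{not} liftable to an $R$-lattice: outside the torus case, the cuspidal unipotent $kL_0^F$-modules arising here (e.g.\ the one for $L_0^F\cong GL_{e\ell^i}(q)$ in case (a), or the analogously parametrized ones for $GU$, $Sp$, $CSp$, $SO$ in cases (b)--(e)) exist only in characteristic $\ell$ and are not reductions of ordinary cuspidal characters, since $L_0^F$ has no cuspidal unipotent character over $K$ at all in those cases. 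So there is no $RL_0^F$-lattice to extend and then reduce. The argument actually carried out in Section~4 of \cite{geck1996towards} establishes the extension of $X_0$ to $N_{G^F}(L_0)$ by analyzing the normalizer structure directly (and verifying a cohomological obstruction vanishes), not by lifting $X_0$. In addition, for case (a) the triviality of the cocycle and the Iwahori--Hecke parameters are not available from \cite{geck1996towards} alone; the paper supplies these from \cite[19.20]{cabanes2004representation}, which your proposal omits.
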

\begin{proof}
 This was proven in \cite[4.3 and 4.4]{geck1996towards} for the cases 
 (b)--(e). All but the last statements can be proven 
 for case (a) by analogous arguments. 
 The last statement follows in case (a) from  \cite[19.20]{cabanes2004representation}.
\end{proof}

We can now obtain a converse of 
Corollary (\ref{cor: normalizers equal is sufficient}) for 
the unipotent representations of the classical groups we consider.
\begin{theorem}\label{thm: imprimitive unipotent classical}
 Let $(G,F)$ be as in (a)--(e) and let $(X_0,L_0)$ be a cuspidal pair of $(G,F)$ where $X_0$ is unipotent.
 Then the following statements are equivalent:
 \begin{enumerate}[(i)]
  \item There exists a $k G^F$-module in the Harish-Chandra
 series of $(L_0,X_0)$ which is primitive.
  \item Every $k G^F$-module in the Harish-Chandra
 series of $(L_0,X_0)$ is primitive.
  \item We have $N_{G^F}(L_0) \neq N_{L^F}(L_0)$ for every 
  proper split Levi subgroup $L \subseteq G$ containing $L_0$.
  \item We are in one of the cases (b)--(e) or 
        we are in case (a) and in case (a) we either have $L_0^F \cong GL_{1}(q)^n$ or we have $L_0^F \cong GL_{e \ell^i}(q)^m$ where $e$ is the order of $q$ modulo $\ell$
        and $n = m e \ell^i$.
 \end{enumerate}
\end{theorem}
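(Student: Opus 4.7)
The overall strategy is to prove the cycle (ii)$\Rightarrow$(i)$\Rightarrow$(iii)$\Rightarrow$(iv)$\Rightarrow$(ii). The implication (ii)$\Rightarrow$(i) is immediate, and (i)$\Rightarrow$(iii) follows contrapositively: if some proper split Levi $L \supseteq L_0$ satisfies $N_{G^F}(L_0) = N_{L^F}(L_0)$, then by Proposition \ref{prop: summary towards} we obtain $N_{G^F}(L_0,X_0) = N_{G^F}(L_0) = N_{L^F}(L_0) = N_{L^F}(L_0,X_0)$, and Corollary \ref{cor: normalizers equal is sufficient} forces every module in the Harish-Chandra series of $(L_0,X_0)$ to be imprimitive, contradicting (i).

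For (iii)$\Rightarrow$(iv), cases (b)--(e) make (iv) automatic, so only case (a) needs work. In $GL_n(q)$, the unipotent cuspidal Levi $L_0$ must be a product of general linear factors whose sizes lie in $\{1\} \cup \{e\ell^i : i \geq 0\}$, the only sizes for which $GL_\cdot(q)$ admits a cuspidal unipotent $k$-module. Writing $L_0 \cong \prod_j GL_{a_j}^{m_j}$ with pairwise distinct $a_j$, if two or more distinct sizes occur I would set $L := \prod_j GL_{a_j m_j}$, a proper split Levi of $GL_n$ containing $L_0$; because the Weyl group of $GL_n$ only permutes equal-size factors, $N_{L^F}(L_0)/L_0^F = \prod_j S_{m_j} = N_{G^F}(L_0)/L_0^F$, which contradicts (iii). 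Hence only one factor size occurs, giving (iv).

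The main implication is (iv)$\Rightarrow$(ii). By Proposition \ref{prop: summary towards}, $\End_{kG^F}(R^G_{L_0}X_0)^\circ$ is an Iwahori-Hecke algebra $H$ of the relative Weyl group $W = N_{G^F}(L_0)/L_0^F$, and via Lemma \ref{la: HC commutative diagram} a simple module in the Harish-Chandra series of $(L_0,X_0)$ is imprimitive precisely when the corresponding simple $H$-module lies in the image of $\Ind_{R^G_L}$ for some proper split Levi $L \supseteq L_0$, i.e.\ is induced from a simple module over the parabolic subalgebra attached to $W_L = N_{L^F}(L_0)/L_0^F$. Under (iv), the Coxeter type of $W$ together with the Hecke parameters (type $A_{n-1}$ with parameter $q$ when $L_0 \cong GL_1^n$, collapsing to $kS_m$ when $L_0 \cong GL_{e\ell^i}^m$ since $q^{e\ell^i} \equiv 1 \pmod{\ell}$, and the mixed types attached to the relative Weyl groups in cases (b)--(e)) fall into the regime of Schönnenbeck's theorem \cite{schoennenbeck2017induced}, which excludes such an induction. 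The main obstacle is matching the exact Iwahori-Hecke data arising here — in particular in the degenerate group-algebra case — to the hypotheses of Schönnenbeck's result and verifying that every proper parabolic subgroup of $W$ appearing there is actually realized by some proper split Levi $L \supseteq L_0$ of $G$.
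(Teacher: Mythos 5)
Your cycle structure $(ii)\Rightarrow(i)\Rightarrow(iii)\Rightarrow(iv)\Rightarrow(ii)$ is logically admissible, and the first three implications are correct: $(ii)\Rightarrow(i)$ is trivial, $(i)\Rightarrow(iii)$ follows from Proposition~\ref{prop: summary towards} and Corollary~\ref{cor: normalizers equal is sufficient} exactly as you say, and your $(iii)\Rightarrow(iv)$ argument in case (a) reproduces the paper's computation of $N_{G^F}(L_0)/L_0^F$ and of the smallest split Levi containing it. The gap is in $(iv)\Rightarrow(ii)$, and it is partly a gap you flag yourself, but you flag it for the wrong reason.

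Your worry about ``matching the exact Iwahori--Hecke data\dots to the hypotheses of Sch\"onnenbeck's result'' is unfounded: \cite[Thm.~1.1]{schoennenbeck2017induced} applies to \emph{any} split Iwahori--Hecke algebra and \emph{any} proper parabolic subalgebra, with no restriction on the Coxeter type or on whether the parameters have degenerated to a group algebra; splitness is automatic under the paper's standing assumption that $(K,R,k)$ is split for $G^F$ and all its subgroups. Likewise you do not need that ``every proper parabolic subgroup of $W$\dots is actually realized by some proper split Levi $L\supseteq L_0$'': Lemma~\ref{la: HC commutative diagram} already shows that imprimitivity is detected by inducing from precisely those parabolic subalgebras that come from split Levis, and Sch\"onnenbeck's theorem then rules all of those out once you know they are proper.

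That last clause is the real gap. To apply Sch\"onnenbeck via Lemma~\ref{la: HC commutative diagram}, you must know that for \emph{every} proper split Levi $L\supseteq L_0$ the image of $\End_{kL^F}(R^L_{L_0}X_0)$ in $\End_{kG^F}(R^G_{L_0}X_0)$ is a \emph{proper} parabolic subalgebra, i.e.\ you must know (iii). In case (a) your combinatorics do yield (iii) from (iv), but in cases (b)--(e) statement (iv) is vacuously true, so $(iv)\Rightarrow(ii)$ contains all the content and you have said nothing about (iii) there. The paper gets this from the analysis of normalizers of Levis admitting cuspidal unipotent modules in the proof of \cite[Prop.~4.3]{geck1996towards}; your proposal omits it. The paper's organization is cleaner precisely because it avoids this: Sch\"onnenbeck first gives the dichotomy ``either every proper Levi yields a proper parabolic subalgebra (hence nothing in the series is imprimitive), or some proper Levi yields the full algebra (hence everything is imprimitive),'' which is $(i)\Leftrightarrow(ii)$ at once; a dimension count and Proposition~\ref{prop: summary towards} give $(i)\Leftrightarrow(iii)$; and $(iii)\Leftrightarrow(iv)$ is then a standalone normalizer computation, with the case (b)--(e) Geck--Hiss--Malle input used exactly there. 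Reorganizing your argument to prove $(iii)\Rightarrow(ii)$ directly, rather than routing through (iv), dissolves both of your stated worries and makes the missing (b)--(e) normalizer analysis visible.
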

\begin{proof}
 The algebra $\End_{k G^F}(R^G_{L_0} X_0)$ is an Iwahori-Hecke algebra
 by Proposition (\ref{prop: summary towards}) and 
  the embedding 
 $$R^G_L : \End_{k L^F}(R^L_{L_0} X_0) \to \End_{k G^F}(R^G_{L_0} X_0)$$
 identifies the domain with a parabolic subalgebra of this Iwahori-Hecke algebra for any split Levi subgroup $L_0 \subseteq L \subseteq G$.
 
 It follows from 
  \cite[Thm. 1.1]{schoennenbeck2017induced} that 
 if this parabolic subalgebra is a proper one, then    
  the induced module $\Ind_{R^G_L} M$ is reducible
  for every $\End_{k L^F}(R^L_{L_0} X_0)^\circ$-module $M$. 
  On the other hand, if the above embedding is an isomorphism, then 
  clearly $\Ind_{R^G_L} M$ is simple for every simple $\End_{k L^F}(R^L_{L_0} X_0)^\circ$-module $M$.
  
  In view of Lemma (\ref{la: HC commutative diagram}), this implies 
  the equivalence of (i) and (ii). 
  
  Comparing dimensions we also find that either of these statements
  is equivalent to $N_{L^F}(L_0,X_0) \neq N_{G^F}(L_0,X_0)$
  for all proper split Levi subgroups $L_0 \subseteq L \subseteq G$.
  By Proposition (\ref{prop: summary towards}), this is equivalent to 
  (iii).
  
  In the cases (b)--(e), the structure of the normalizers of Levi subgroups admitting cuspidal unipotent representations 
  has been analyzed in the proof of \cite[Prop. 4.3]{geck1996towards}.
  It is easy to see that (iii) is always satisfied in these cases.
  
  In case (a), $L_0^F$ is conjugate in $G^F$ to a group of the form 
  $$ GL_{1}(q)^{m_{-1}} \times \prod_{i = 0}^r GL_{e \ell^{i}}(q)^{m_i} $$
 with $e$ the order of $q$ modulo $\ell$ and $m_{-1},m_0, \dots, m_r \geq 0$  non-negative integers satisfying  
  $n - m_{-1} = e \sum_{i = 1}^r m_i \ell^i$ (cf. \cite[(7.9)]{geck1994cuspidal}).
  
  The group of rational points of the smallest split Levi subgroup containing $N_{G^F}(L_0)$ can now easily be seen to 
  be conjugate in $G^F$ to a group of the form 
 $$ GL_{m_{-1}}(q) \times \prod_{i = 0}^r GL_{e m_i \ell^{i}}(q).$$
  Thus, in case (a), condition (iii) is equivalent to 
  $L_0^F$ being isomorphic to $GL_{1}(q)^n$ or to $GL_{e \ell^i}(q)^m$ where 
  $n = m e \ell^i$.
 This completes the proof.
\end{proof}

\section{Imprimitivity for \texorpdfstring{$GL_n(q)$}{GLn(q)} 
and \texorpdfstring{$GU_n(q)$}{GUn(q)}} 
 
 In this section we let $G = GL_n(\F)$ and $F$ 
 be either 
 the standard Frobenius morphism $F_q$ 
 defined by $F_q((a_{i,j})) = (a_{i,j}^q)$
 or the twisted Frobenius $F_q'$ defined by 
 $F_q'((a_{i,j})) = (a_{i,j}^q)^{-tr}$ for all $(a_{i,j}) \in GL_n(\F)$.
 
 For these groups, we can actually use our results on the Morita
 equivalence by Bonnaf\'{e} and Rouquier together 
 with Theorem (\ref{thm: imprimitive unipotent classical}) to 
 obtain the converse of Corollary (\ref{cor: normalizers equal is sufficient}) for arbitrary Lusztig series.
 
 In the following, we can and will identify $(G,F)$ with its dual.

 \begin{corollary}\label{cor: GL primitive equivalent to normalizers equal}
 Let $M$ be a simple $k G^F$-module which belongs to the Harish-Chandra series of $(L_0,X_0)$.
  Then $M = R^G_{L} M'$ for some split Levi subgroup $L_0 \subseteq L \subseteq G$ 
  if and only if $N_{G^F}(L_0,X_0) = N_{L^F}(L_0,X_0)$. 
\end{corollary}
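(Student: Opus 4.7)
The plan is to reduce this corollary to the unipotent classical case (Theorem~\ref{thm: imprimitive unipotent classical}) via the Bonnaf\'{e}--Rouquier Morita equivalence from Section~3. The implication ``$\Leftarrow$'' is already Corollary~\ref{cor: normalizers equal is sufficient}, and the case $L = G$ is trivial on both sides, so the work lies in the forward direction for a proper split Levi $L_0 \subseteq L \subsetneq G$.

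First, using exactness and faithfulness of $R^G_L$, I would replace $M'$ by a simple $kL^F$-module lying in the HC series of $(L_0,X_0)$ inside $L^F$-mod. Let $s \in (L_0^*)^{F^*}$ be the semisimple $\ell'$-element with $X_0 \in \mathcal{E}_\ell(L_0, F, [s])$. For $G \in \{GL_n, GU_n\}$ the centralizer $C_{G^*}(s)$ is itself a Levi subgroup of $G^*$, so in the setup of Section~3 I may take $G_s^* = C_{G^*}(s)$. Setting $L_s = L \cap G_s$ and $L_{0,s} = L_0 \cap G_s$, I would then apply Lemma~\ref{la: Morita commutative diagram} twice---once with the smaller Levi equal to $L_0$ itself (to transfer the cuspidal pair) and once with it equal to $L$ (to transfer the imprimitivity relation)---to produce data $(L_{0,s}, \wt{X}_0, L_s, \wt{M}', \wt{M})$ on the $G_s$-side such that $(L_{0,s}, \wt{X}_0)$ is cuspidal, $\wt{M}$ is simple in its HC series, and $\wt{M} = R^{G_s}_{L_s}\wt{M}'$.

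Since $s$ is central in $G_s$, the $\ell'$-linear character $\hat{s}^{-1}$ of $G_s^F$ is well defined over $k$, restricts compatibly to every $F$-stable subgroup, interchanges the Lusztig series $[s]$ and $[1]$, and commutes with HC induction; tensoring by it converts $(L_{0,s}, \wt{X}_0)$ into a cuspidal \emph{unipotent} pair. Because $G_s^F$ is a direct product of groups of types~(a) and~(b) from Section~4, Theorem~\ref{thm: imprimitive unipotent classical} applied factor-by-factor then shows that the existence of the imprimitive $\wt{M}$ forces $N_{G_s^F}(L_{0,s}) = N_{L_s^F}(L_{0,s})$. Since $\hat{s}^{-1}$ is the restriction of a class function on $G_s^F$, the twist leaves cuspidal stabilizers intact, and this equality is equivalent to $N_{G_s^F}(L_{0,s}, \wt{X}_0) = N_{L_s^F}(L_{0,s}, \wt{X}_0)$.

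To finish I would transport this equality back to $G^F$. Lemma~\ref{la: Morita commutative diagram} identifies the endomorphism algebras $\End_{kG^F}(R^G_{L_0} X_0) \cong \End_{kG_s^F}(R^{G_s}_{L_{0,s}} \wt{X}_0)$ and analogously on the $L$-side, and their $k$-bases $\{B_w\}$ (recalled before Lemma~\ref{la: HC commutative diagram}) are indexed by the relevant normalizer quotients. Comparing cardinalities yields $|N_{G^F}(L_0,X_0)/L_0^F| = |N_{G_s^F}(L_{0,s},\wt{X}_0)/L_{0,s}^F|$ and the analogue for $L$; combined with the equality inside $G_s^F$ and the inclusion $N_{L^F}(L_0, X_0) \subseteq N_{G^F}(L_0, X_0)$, this forces the desired equality. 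The main delicacy is in the transfer step: I need Lemma~\ref{la: Morita commutative diagram} to carry every relevant piece of structure (cuspidality, the parabolic-subalgebra inclusion of endomorphism rings, the stabilizers $N(\cdot,\cdot)$) faithfully across the Morita equivalence, and the subsequent $\hat{s}$-twist must preserve stabilizer data, which ultimately hinges on linear characters being class functions.
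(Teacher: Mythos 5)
Your proposal is correct and follows essentially the same route as the paper's own proof: reduce to a quasi-isolated (here, unipotent) situation via the Bonnaf\'{e}--Rouquier Morita equivalence using Lemma~\ref{la: Morita commutative diagram}, twist by the linear character coming from $s$ being central in $G_s$ to convert the $[s]$-series to the unipotent series, invoke Theorem~\ref{thm: imprimitive unipotent classical} on $G_s^F$ (a product of general linear and unitary groups), and transport the normalizer equality back to $G^F$ by identifying the Hecke endomorphism algebras and comparing dimensions. The only cosmetic differences are that you explicitly note $M'$ may be taken simple and in the right Harish-Chandra series, and that $C_{G^*}(s)$ is itself a Levi in type $A$, both of which the paper leaves implicit.
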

\begin{proof}
 If $X_0$ is unipotent, then the claim holds by Theorem (\ref{thm: imprimitive unipotent classical}).
 There exists a semisimple element $s \in ({L_0}^{F})_{\ell'}$ such that $X_0$ is an object of $k L_0^F e_s^{L_0^F}\mathrm{-mod}$.
 The groups $G_s = C_G(s)$, $L_s = C_L(s)$ and $L_{0,s} = C_{L_0}(s)$
 are rational Levi subgroups of $G$, $L$ and $L_0$, respectively. 
 We consider the diagram
  \[
  \xymatrix{
   k G^F e_s^{G^F}\mathrm{-mod}\ar[rr] && k G_s^F e_s^{G_s^F}\mathrm{-mod} \\
   k L^F e_s^{L^F}\mathrm{-mod}\ar[rr]\ar[u]^{R^{G}_{L}} && k L_s^F e_s^{L_s^F}\mathrm{-mod} \ar[u]^{R^{G_s}_{L_s}} \\
   k L_{0}^F e_s^{L_0^F}\mathrm{-mod}\ar[rr]\ar[u]^{R^{L}_{L_0}} && k L_{0,s}^F e_s^{L_{0,s}^F}\mathrm{-mod} \ar[u]^{R^{L_s}_{L_{0,s}}}
  }
  \]
 in which the horizontal arrows stand for the respective Bonnaf\'{e}-Rouquier Morita equivalence. 
 By Lemma (\ref{la: Morita commutative diagram}), this diagram commutes up to natural isomorphism.
 Note that $s$ is central in $G_s$, so we have isomorphisms
 $$k G_{s}^F e_s^{{G_s}^F}  \cong k G_{s}^F e_1^{{G_s}^F}$$
 and
 $$k L_{s}^F e_s^{{L_s}^F}  \cong k L_{s}^F e_1^{{L_{s}}^F}$$
 as well as 
 $$k L_{0,s}^F e_s^{{L_{0,s}}^F}  \cong k L_{0,s}^F e_1^{{L_{0,s}}^F}$$
 induced by a linear character $\lambda_s$ of $G_s^F$. 
 As tensoring with linear characters commutes with
  Harish-Chandra induction, the diagram 
 \[
  \xymatrix{
   k G_s^F e_s^{G_s^F}\mathrm{-mod}\ar[rr]^{\lambda_s \otimes -} && k G_s^F e_1^{G_s^F}\mathrm{-mod} \\
   k L_s^F e_s^{L_s^F}\mathrm{-mod}\ar[rr]^{\lambda_s \otimes -}\ar[u]^{R^{G}_{L}} && k L_s^F e_1^{L_s^F}\mathrm{-mod} \ar[u]^{R^{G_s}_{L_s}} \\
   k L_{0,s}^F e_s^{L_{0,s}^F}\mathrm{-mod}\ar[rr]^{\lambda_s \otimes -}\ar[u]^{R^{L}_{L_0}} && k L_{0,s}^F e_1^{L_{0,s}^F}\mathrm{-mod} \ar[u]^{R^{L_s}_{L_{0,s}}}
  }
  \]
  is commutative.
  
 Combining these diagrams we obtain a unipotent $k G_s^F$-module 
 $M_u$ such that $\lambda_s \otimes M_u$ corresponds 
 to $M$ under the Morita equivalence between 
 $k G^F e_s^{G^F}$ and $k G_{s}^F e_s^{{G_s}^F}$.
 In the same way, we obtain a unipotent cuspidal $k L_{0,s}^F$-module
 $X_u$ with $\lambda_s \otimes X_u$ corresponding to $X_0$
 under the analogous Morita equivalence.
 
 Suppose now that $M = R^G_L M'$ for some $k L^F$-module $M'$.
 We let $M_u'$ be the unipotent $k L_s^F$-module that corresponds 
 to $M'$. We thus have $M_u \cong R^{G_s}_{L_s} M_u'$.
 Since $G_s^F$ is a direct product of general linear groups and general unitary groups 
 we have $N_{G_s^F}(L_{0,s},X_u) = N_{L_s^F}(L_{0,s},X_u)$
 by Theorem (\ref{thm: imprimitive unipotent classical}).
 
 Using 
 $$\End_{k L^F}(R^L_{L_0} X_0) \cong \End_{k L_s^F}(R^{L_s}_{L_{0,s}} X_{u})$$
 and 
 $$\End_{k G^F}(R^G_{L_0} X_0) \cong \End_{k G_s^F}(R^{G_s}_{L_{0,s}} X_{u})$$
 and comparing dimensions, we obtain   
 $N_{G^F}(L_{0},X_0) = N_{L^F}(L_{0},X_0)$ as desired.
\end{proof}

 \section{Acknowledgments}
 The author was supported by the DFG-collaborative research center TRR 195.

\bibliographystyle{abbrv}

\end{document}